\documentclass[12pt]{article}
\usepackage{amssymb,amsmath,amsthm}
\usepackage{hyperref}
\usepackage{cite}
\usepackage[english]{babel}
\newtheorem{theorem}{Theorem}
\newtheorem{lemm}{Lemma}

\usepackage{times, url}
\textheight 24.5cm
\textwidth 16.3cm
\oddsidemargin 0.in
\evensidemargin 0.in
\topmargin -1.8cm
\begin{document}
	\setcounter{page}{1} 
	\begin{center}
		{\LARGE \bf  On the largest prime factors of shifted semiprime numbers. }
		\vspace{8mm}
		
		{\Large \bf Tam D. Do$^1$,}
		\vspace{3mm}
		
		$^1$ School of applied mathematics and informatics, Hanoi University of science and technology \\ 
		1 Dai Co Viet, Hanoi, Vietnam \\
		e-mail: \url{tam.doduc@hust.edu.vn}
		\vspace{2mm}
	\end{center}
	\vspace{2 mm}
	\section{Introduction} 
	Let $P^{+}(n)$ be the largest prime factor of $n$. Chebyshev proved that $$\lim_{n\rightarrow{\infty}}P^{+}(n^2+1)/n=\infty.$$ In this direction, C. Hooley \cite{Hooley67} showed that $$P^{+}(n^2+1)/n\geq n^{0.1001483...}$$
	for sufficiently large $n.$ In 1982, J. M. Deshouillers and H. Iwaniec \cite{Iwaniec82} improved this result $$P^{+}(n^2+1)/n\geq n^{0.202468...}.$$ Their improvement came as an application of their bounds for linear forms of Kloosterman sums. This result was improved by Bretèche and Drappeau \cite{Bretèche and Drappeau} and Jori Merikoski \cite{Merikoski}.
	
	There is an interesting analogue of Chebyshev's problem about largest prime factor of shifted primes, which has an obvious connection with conjectures concerning the
	infinitude of prime-pairs. Y. Motohashi \cite{Motohashi} showed that, there is exist a constant $1/2<\theta <1-1/(2\sqrt[4]{e})=0.611059...$ such that $$P^{+}(p+a)>x^{\theta}$$ for all sufficiently large $x$ and fixed integer $a$. His method involved the use of both Bombieri's theorem and the Brun-Titchmarsh theorem about number of primes in arithmetic progression. Applying the Brun-Titchmarsh theorem on average, C. Hooley \cite{Hooley72} proved that parameter $\theta$ can be replaced by $0.619933...$. Subsequently this limit for $\theta$ was improved by C. Hooley \cite{Hooley73} to $0.625$. The main idea in Hooley's proof is applying Selberg's method to certain sequences of the form $kq-a$ in which the positive integer $k$ and the prime $q$ lie in appropriate ranges.
	
	In this present paper, the largest prime factor $P^{+}(n+a)$, where semi-prime $n<x$, is studied. We remark that semi-prime numbers form sufficiently “rare” sub-sequences of the natural series and for $c\geq 2$ numbers $p_1p_2^c$ is close to the sequence of prime numbers. The latter subject obviously has connection with problem about largest prime factor of shifted primes.  We show that $P^{+}(n+a)>x^{
		\theta}$, $0.6<\theta<0.625$ for sufficiently large $x$. We will follow the treatments, which are used in papers of Y. Motohashi \cite{Motohashi} and C. Hooley \cite{Hooley73}.
	\section{Lemmas}
	In order to prove the latter theorem we will need the following lemmas:
	\begin{lemm}[\cite{Hooley73}]
		Let $r$ be constant such that $1<r<2$ and $a$ be given non-zero integer. For $X$ satisfy the inequalities $x^{1/2}<X<x^{3/4}$ we define $$V(X)=V_r(X)=\sum_{X<q\leq rX} \pi(x;q,a)\log{q}.$$ Then, we have $$V(X)<\frac{(4+\eta)(r-1)x}{\log{x}}.$$
	\end{lemm}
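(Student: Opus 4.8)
The plan is to deduce the bound directly from the Brun--Titchmarsh theorem, applied to each modulus separately and then summed over the range. Here $q$ denotes a prime (this is the relevant case for Hooley's $kq-a$ method, and it is forced: for a general modulus the weighted sum $\sum\log q/\phi(q)$ carries an extra factor $\log X$ and the bound fails), so $\phi(q)=q-1$. Writing $X=x^{\beta}$ with $\tfrac12<\beta<\tfrac34$, the Brun--Titchmarsh inequality gives for each $q<x$ that $\pi(x;q,a)\le\frac{2x}{(q-1)\log(x/q)}$. Since $q\le rX$ we have $\log(x/q)\ge(1-\beta)\log x-\log r$, which equals $(1-\beta)\log x\,(1+o(1))$ uniformly for $q$ in the range because $1-\beta>\tfrac14$. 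First I would substitute this and sum, arriving at $V(X)\le(1+o(1))\frac{2}{1-\beta}\cdot\frac{x}{\log x}\sum_{X<q\le rX}\frac{\log q}{q-1}$.

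Next I would evaluate the prime sum by the prime number theorem and partial summation: $\sum_{X<q\le rX}\frac{\log q}{q}=\log r+o(1)$, and the same with $q-1$ in the denominator. Combined with the elementary inequality $\log r<r-1$, this yields $V(X)\le(1+o(1))\frac{2(r-1)}{1-\beta}\cdot\frac{x}{\log x}$. At the lower end of the range, where $\beta\to\tfrac12$, the constant $2/(1-\beta)$ tends to $4$, so the claimed bound $(4+\eta)(r-1)x/\log x$ already falls out of the pointwise Brun--Titchmarsh theorem, the $o(1)$ losses being absorbed into $\eta$ for $x$ large.

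The hard part will be the upper end of the range. As $\beta\to\tfrac34$ the factor $2/(1-\beta)$ grows to $8$, overshooting the target by a factor of about $2$, so a pointwise application of Brun--Titchmarsh cannot deliver a uniform constant $4+\eta$. To recover the missing factor I would use that the Brun--Titchmarsh constant $2$ behaves like $1$ once it is averaged over the many moduli $q\in(X,rX]$. Following Hooley, this is arranged by applying Selberg's sieve not to each progression in isolation but to the bilinear sequences $kq-a$: after switching the order of summation, $V(X)=\sum_{p\le x}\sum_{X<q\le rX,\,q\mid(p-a)}\log q$, and since two primes in $(X,rX]$ would multiply to more than $X^2>x\ge p-a$, each $p$ contributes a single $\log q_p$ with cofactor $k=(p-a)/q_p\le x/X$. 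Sifting this reorganized sum with extra averaging over $k$ is what supplies the gain.

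The crux, and the step I expect to be most delicate, is to control on average the error terms governing the distribution of $kq-a$ in residue classes, pushing the effective level of distribution up to $x^{3/4}$; this is precisely where the large sieve (equivalently, Selberg's method with well-chosen weights) does the real work and removes the surplus factor $2$. Once this averaged form of Brun--Titchmarsh is in hand, summing against $\log q$ and invoking $\log r<r-1$ as above gives $V(X)<\frac{(4+\eta)(r-1)x}{\log x}$ throughout the stated range.
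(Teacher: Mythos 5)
The paper itself contains no proof of this lemma --- it is imported verbatim from Hooley \cite{Hooley73} --- so the only meaningful benchmark is Hooley's original argument. Your opening step is correct and correctly diagnosed: pointwise Brun--Titchmarsh in the Montgomery--Vaughan form $\pi(x;q,a)\le 2x/(\varphi(q)\log(x/q))$, together with $\sum_{X<q\le rX}\log q/(q-1)=\log r+o(1)$ and $\log r<r-1$, gives the constant $\frac{2}{1-\beta}(r-1)$, which is $4+\eta$ only for $\beta$ in a neighbourhood of $\tfrac12$ shrinking with $\eta$, and degrades to $8(r-1)$ at $\beta=\tfrac34$. You have also correctly identified the skeleton of Hooley's actual argument: since $X^2>x$, each $p-a$ has at most one prime factor in $(X,rX]$, so $V(X)$ counts pairs $(k,q)$ with $kq+a$ prime and $k\le x/X$, and the surplus factor $2$ is to be recovered by running Selberg's sieve over the bilinear family $kq-a$ rather than over each progression in isolation.

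The gap is that the entire content of the lemma lives in the step you defer. The sentence about "controlling on average the error terms governing the distribution of $kq-a$ in residue classes, pushing the effective level of distribution up to $x^{3/4}$" describes what must be true, not why it is true. The difficulty is concrete: treating the remainders trivially, $\sum_{X<q\le rX}\#\{k\le x/q:\ kq\equiv -a\ (\mathrm{mod}\ d)\}$ deviates from its main term by $O(X/\log X)$ for each $d$, which after summing the Selberg weights over $d\le D$ costs $O(DX)$ and forces $D\ll x^{1-\beta}$ --- exactly the level already implicit in pointwise Brun--Titchmarsh, hence no gain at all. Obtaining the constant $4$ uniformly up to $X=x^{3/4}$ requires genuine cancellation in the weighted remainder sum coming from the averaging over the prime moduli $q$, plus a verification that the resulting main term (an average over $k$ of singular-series-type products attached to the pair of forms $q$ and $kq+a$) collapses to $(4+\eta)(r-1)x/\log x$ rather than to a larger multiple; neither is routine, and neither is attempted. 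As written, your argument establishes the lemma only for $X\le x^{1/2+c\eta}$ and reduces the remaining range to an unproved averaged sieve estimate which is precisely Hooley's theorem.
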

	Note that in proof of lemma 2.1 the constants implied by the O-notation depend on given integer $a$ as divisor function $\tau(a)$. Hence, lemma 2.1 still hold for case $a$ increasing like exponent of $x$.
	\begin{lemm}[\cite{Mongomery}, p. 88] For $y \geq 2$, the following inequality holds
		$$
		\sum_{\substack{x<n \leq x+y \\ n - \text{semi-prime}}} 1 \leq \frac{2 y \log \log y}{\log y}\left(1+O\left(\frac{1}{\log \log y}\right)\right)
		$$
	\end{lemm}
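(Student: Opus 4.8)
The plan is to reduce the count to a prime-counting problem in short intervals and then apply the Brun--Titchmarsh inequality together with Mertens' theorem. First I would use the factorisation of a semiprime: every $n\in(x,x+y]$ with exactly two prime factors can be written $n=p_1p_2$ with $p_1$ its least prime factor, so that $p_1\le\sqrt{n}\le\sqrt{x+y}$ and $p_2=n/p_1$ is again prime. Grouping by the least factor (which at worst overcounts the thin set of balanced semiprimes with both factors near $\sqrt{x}$, and so only enlarges the right-hand side) gives
$$\sum_{\substack{x<n\le x+y\\ n\ \text{semi-prime}}}1\ \le\ \sum_{p_1\le\sqrt{x+y}}\Big(\pi\big(\tfrac{x+y}{p_1}\big)-\pi\big(\tfrac{x}{p_1}\big)\Big),$$
where for each fixed $p_1$ the inner term counts primes $p_2$ in an interval of length $y/p_1$.

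Next I would bound each inner term by the Brun--Titchmarsh inequality $\pi(M+L)-\pi(M)\le 2L/\log L$, taken with $M=x/p_1$ and $L=y/p_1$, obtaining $2(y/p_1)/\log(y/p_1)$. The problem then becomes the evaluation of
$$T\ :=\ 2y\sum_{p_1\le\sqrt{x+y}}\frac{1}{p_1\,\log(y/p_1)}.$$
Writing $\sum_{p\le t}1/p=\log\log t+O(1)$ and summing by parts, with the substitution $v=\log t$, the sum reduces to $\int_{\log 2}^{\log\sqrt{x+y}}\frac{dv}{v(\log y-v)}+O\big(\tfrac{1}{\log y}\big)$. The partial-fraction identity $\frac{1}{v(\log y-v)}=\frac{1}{\log y}\big(\frac1v+\frac{1}{\log y-v}\big)$ shows this integral equals $\frac{\log\log y}{\log y}\big(1+O(\tfrac{1}{\log\log y})\big)$, the dominant term $\tfrac1{\log y}\int dv/v=\tfrac{\log\log y}{\log y}$ coming from the small primes. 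Hence $T=\frac{2y\log\log y}{\log y}\big(1+O(\tfrac{1}{\log\log y})\big)$, which is exactly the asserted bound.

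The part I expect to be delicate is the extraction of the sharp constant $2$. Replacing $\log(y/p_1)$ by the uniform lower bound $\tfrac12\log y$ valid on the whole range $p_1\le\sqrt{x+y}$ would only yield the constant $4$; the correct constant $2$ appears because, when weighted by $1/(p_1\log(y/p_1))$, the effective size of $\log(y/p_1)$ is $\log y$ rather than $\tfrac12\log y$, and capturing this requires the partial summation above rather than a crude estimate. A secondary point is the range of validity: the Brun--Titchmarsh bound needs $L=y/p_1\ge 2$ throughout, and the identification $\log\sqrt{x+y}\asymp\log y$ is what forces $\log\log y$ (not $\log\log x$) to appear; both hold once $y$ is not too small relative to $x$ (say $x\ll y^{2-\delta}$), which is the regime relevant here. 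Finally, since $\tfrac{\log\log t}{\log t}$ is eventually decreasing, the estimate remains a valid upper bound even when $y$ is somewhat smaller than $x$, so the stated inequality persists beyond the clean regime.
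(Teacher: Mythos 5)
Your approach --- factor $n=p_1p_2$ by the least prime factor, bound the number of $p_2$ in $(x/p_1,(x+y)/p_1]$ by Brun--Titchmarsh, and evaluate $2y\sum_{p_1}1/(p_1\log(y/p_1))$ by partial summation --- is correct and does produce the constant $2$ and the factor $\log\log y/\log y$, \emph{but only when every inner interval is long enough for Brun--Titchmarsh to apply}, i.e.\ $y/p_1\ge 2$ for all $p_1\le\sqrt{x+y}$, which forces $y\gg\sqrt{x}$. The lemma, however, is a short-interval bound that is uniform in $x$ for every $y\ge2$, and that is exactly why the paper offers no proof but points to Montgomery--Vaughan (p.~88) and Hensley's almost-prime sieve: there the inequality comes from running Selberg's sieve directly on the integers of $(x,x+y]$, with the sifting range limited by $y$, and no factorization of $n$ is ever invoked. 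In the genuinely short regime (say $y=x^{1/10}$) your decomposition breaks: for every $p_1\in(y/2,\sqrt{x+y}]$ the interval for $p_2$ has length below $2$, Brun--Titchmarsh says nothing, and the only available termwise bounds (at most one multiple of $p_1$ in the interval, or at most $\log x/\log y$ such prime factors per $n$) sum to $O(y\log x/\log y)$, which exceeds the target $2y\log\log y/\log y$ by a factor of order $\log x/\log\log y$. Your closing monotonicity remark does not repair this: the right-hand side $2y\log\log y/\log y$ is essentially linear in $y$, so shrinking the interval shrinks the claimed bound as fast as it shrinks the count, and an inequality established at some $y_0\gg\sqrt{x}$ only gives $S(x,y')\le f(y_0)$, which is far larger than $f(y')$ when $y'\ll y_0$.

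Two mitigating points. First, in the one place the paper uses the lemma (the estimate of $C_4$), it is applied to dyadic blocks $2^{g-1}N<n\le 2^gN$, i.e.\ with $y\asymp x$; in that regime your argument is complete and arguably more elementary than the sieve, so the main theorem is not endangered --- it is only the lemma in its stated uniformity that your route does not prove. Second, you correctly identified and handled the delicate point in the range you do cover: the weighted average of $\log(y/p_1)$ under the weight $1/(p_1\log(y/p_1))$ really is $\log y\,(1+O(1/\log\log y))$, which is how the sharp constant $2$ (rather than $4$) survives; the same mechanism is at work inside the sieve proof.
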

	One should find more detailed proof of this lemma in D. Hensley's paper \cite{hensly78}.
	\section{Largest prime factor of shifted semi-primes}
	Our main theorem can be formulated as follows:
	\begin{theorem}
		Let $a$ be fixed integer then for all constants $0.6<\theta <0.625$ and $x>x_0(\theta)$ the greatest prime factor of $$\prod_{\substack{n-\text{semi-prime}\\n\leq x}}(n+a)$$ exceeds $x^{\theta}$.
	\end{theorem}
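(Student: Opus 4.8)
The plan is to argue by contradiction. Suppose $x$ is large and that \emph{every} semi-prime $n\le x$ satisfies $P^{+}(n+a)\le x^{\theta}$, so that each $n+a$ is $x^{\theta}$-smooth. The object I would track is the weighted sum
$$\Sigma=\sum_{\substack{n\le x\\ n-\text{semi-prime}}}\log(n+a).$$
Writing $\Pi_2(t)$ for the number of semi-primes up to $t$, Landau's asymptotic $\Pi_2(t)\sim t\log\log t/\log t$ (its upper-bound side being furnished by Lemma 2.2 summed over dyadic intervals) together with partial summation gives $\Sigma\sim x\log\log x$; this lower bound for $\Sigma$ is what the smoothness assumption must eventually contradict.

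First I would expand $\log(n+a)=\sum_{q^{k}\,\|\,(n+a)}k\log q$ and interchange the order of summation, obtaining $\Sigma=\sum_{q}A(q)\log q$ up to a lower-order error from the prime powers $q^{k}$ with $k\ge2$, where $A(q)=\#\{n\le x:\ n\text{ semi-prime},\ q\mid n+a\}$. Since $n+a\le x+a$ cannot have two prime factors exceeding $\sqrt{x+a}$, under the assumption the relevant primes split into the small range $q\le x^{1/2}$ and the medium range $x^{1/2}<q\le x^{\theta}$. For the small primes, equidistribution of semi-primes in the reduced class $-a\bmod q$ (valid up to $q=x^{1/2-\delta}$, with the thin tail $x^{1/2-\delta}<q\le x^{1/2}$ absorbed by a Brun--Titchmarsh-type upper bound for semi-primes coming from Lemma 2.2) gives $A(q)\sim\Pi_2(x)/q$, whence by Mertens' theorem
$$\sum_{q\le x^{1/2}}A(q)\log q\sim\Pi_2(x)\sum_{q\le x^{1/2}}\frac{\log q}{q}\sim\tfrac12\,x\log\log x.$$
Thus the small primes account for exactly half of $\Sigma$, and the medium primes must supply the remaining $(\tfrac12-o(1))\,x\log\log x$.

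The crux is to bound the medium contribution $\Sigma_{\mathrm{med}}=\sum_{x^{1/2}<q\le x^{\theta}}A(q)\log q$ from above and show it is too small. Here I would reduce the semi-prime count to a prime count: writing each semi-prime as $n=p_{1}p_{2}$ with $p_{1}\le p_{2}$, so $p_{1}\le\sqrt{x}<q$, the congruence $q\mid p_{1}p_{2}+a$ pins $p_{2}$ to the class $-ap_{1}^{-1}\bmod q$, giving $A(q)\le\sum_{p_{1}\le\sqrt{x}}\pi(x/p_{1};q,-ap_{1}^{-1})$. Note that this residue may be as large as a power of $x$, which is exactly the robustness granted by the remark following Lemma 2.1. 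Interchanging summations and applying Lemma 2.1 dyadically on the modulus range $(x^{1/2},x^{\theta}]$ (of exponent-length $\theta-\tfrac12$) to the inner sum for each fixed $p_{1}$ produces a bound of shape $(4+\eta)(\theta-\tfrac12)\,x/p_{1}$; summing over $p_{1}$ and invoking $\sum_{p_{1}\le\sqrt{x}}1/p_{1}\sim\log\log x$ yields
$$\Sigma_{\mathrm{med}}\le(4+\eta)\bigl(\theta-\tfrac12\bigr)\,x\log\log x\,(1+o(1)).$$
Comparing with the required lower bound $(\tfrac12-o(1))\,x\log\log x$, the two are incompatible as soon as $(4+\eta)(\theta-\tfrac12)<\tfrac12$; letting $\eta\to0^{+}$ this holds for every $\theta<\tfrac58=0.625$, which delivers the contradiction throughout the stated range.

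I expect the medium-range estimate to be the main obstacle, and specifically the alignment of ranges in the appeal to Lemma 2.1: the lemma requires the modulus to lie below $(x/p_{1})^{3/4}$, which fails once $p_{1}$ exceeds $x^{1-4\theta/3}$, so the pairs with $p_{1}$ large (equivalently with $q$ close to $x/p_{1}$) must be excised from the prime-counting reduction and treated separately. Fortunately these exceptional $p_{1}$ contribute only $O(1)$ to $\sum 1/p_{1}$, hence $O(x)=o(x\log\log x)$ after a crude estimate (for instance the trivial $\pi(x/p_{1};q,\cdot)\le1$ when $q>x/p_{1}$, or Lemma 2.2 applied to the short blocks carved out by the progression), so they do not disturb the constant. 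The other delicate point is securing the sharp constant $\tfrac12$ on the small-prime side, i.e.\ the near-asymptotic for $A(q)$ up to $q=x^{1/2-\delta}$; this is a standard sieve input and does not affect the threshold $5/8$, which is determined entirely by the constant $4$ in Lemma 2.1.
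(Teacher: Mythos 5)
Your sketch reproduces the paper's argument in all essentials: the total mass $\sum\log(n+a)\sim x\log\log x$; the constant $\tfrac12$ from moduli $q\le\sqrt{x}$ via Bombieri--Vinogradov plus Mertens (the paper's sums $A$, $C_1$, $C_2$, contributing $\tfrac14+o(1)+\tfrac14$); the factor $(4+\eta)(\theta-\tfrac12)$ from Lemma 2.1 applied in blocks of ratio $r\to1$ over the medium moduli (the paper's $C_3$); and the threshold $5/8$ emerging from $\tfrac12-4(\theta-\tfrac12)>0$. You also correctly isolate the regime the paper calls $C_4$, namely $p_1>x^{1-4\theta/3}$, where $x^\theta$ exceeds $(x/p_1)^{3/4}$ and Lemma 2.1 no longer applies. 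The contradiction framing and the choice to cut at $q=\sqrt{x}$ rather than $\sqrt{x/p_1}$ are cosmetic differences.

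The genuine gap is your disposal of that exceptional regime. The remark that the exceptional $p_1$ contribute $O(1)$ to $\sum 1/p_1$ converts into $O(x)$ only if, for each such $p_1$, you can show $\sum_{(x/p_1)^{3/4}<q\le x^{\theta}}\log q\cdot\pi(x/p_1;q,\cdot)\ll x/p_1$, and neither of your crude estimates delivers this. The bound $\pi(x/p_1;q,\cdot)\le 1$ applies only when $q>x/p_1$, so it misses the sub-range $(x/p_1)^{3/4}<q\le x/p_1$; there the trivial bound $\pi(x/p_1;q,\cdot)\le 1+x/(p_1q)$ gives, after summing $\log q/q$ over a range of exponent-length $\asymp\log x$, a contribution of order $x\log x/p_1$ per $p_1$, hence $\asymp x\log x$ in total, which swamps the main term $x\log\log x$ instead of being absorbed by it (and Brun--Titchmarsh degenerates here for the reason the paper states: $x/(qp_1)$ can be bounded). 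Your parenthetical appeal to Lemma 2.2 is not yet an argument either, since that lemma counts semi-primes in an interval, not in the progression $n\equiv-a\pmod q$; this is exactly the point where the paper runs a separate computation, using the hypothesis $\theta>0.6$ to make $x^{\theta}\cdot x^{2-8\theta/(3-\varepsilon)}\ll x$. A clean way to close your gap is to resum over $n$ rather than over pairs $(p_1,q)$: every modulus in the exceptional range exceeds $x^{3/8-\varepsilon}$, so each $n+a\le 2x$ admits at most $8/3+O(\varepsilon)$ such prime divisors, whence the exceptional contribution is at most $(8/3+O(\varepsilon))\log x$ times the number of semi-primes $n\le x$ whose smaller factor exceeds $x^{1-4\theta/3}$, which is $\ll x/\log x$, giving $O(x)$ as required. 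Without some such device the constant in front of $x\log\log x$, and hence the threshold $5/8$, is not established.
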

	\begin{proof}
		For simplicity we confine our attention to the case in which $a=-1$. Beginning the proof we follow Motohashi \cite{Motohashi}. Let $$R=\prod_{\substack{2\leq p_1p_2\leq x\\p_1\leq p_2}}(p_1p_2-1)=\prod_{q\leq P^+(p_1p_2-1)}q^{\nu_q(R)}$$  where the product is taken over primes $p_1, p_2, q$ and $P^+(p_1p_2-1) - $ largest prime factor of $R$. We can easily obtain for the exponent $$\nu_q(R)=m_1+m_2+m_3+\cdots$$ where $m_s=m_s(q)$ is number of prime pairs such that $n-1=p_1p_2-1\equiv 0\pmod{q^s} $. Here sum over $m_s$ is finite $s\leq \tau=\log(x+1)/\log{q}.$ Fundamental calculation yields 
		\begin{equation}
			\label{logR}
			\log(R)=\sum_{\substack{2\leq p_1p_2\leq x\\p_1\leq p_2}}\log(p_1p_2-1)=x\log{\log{x}}+O(x).
		\end{equation} 
		On the other hand, we have
		$$\log(R)=\sum_{q\leq P^+(n-1)}\log{q}\sum_{1\leq s\leq \tau}m_s(q).$$
		Let $Q=\sqrt[4]{x}(\log{x})^{-B}$. According to the Bombieri-Vinogradov theorem, the constant $B$ can be chosen such that the following inequality holds:
		$$
		\sum_{q\leq Q}\max_{(b,q)=1}\left| \pi\left(\frac{x}{p_1};q,b\right)-\frac{Li(x/p_1)}{\varphi(q)} \right| \ll \frac{x}{p_1(\log{x/p_1})^2}.
		$$
		Then we divide the sum over $q\leq p_x$ and $1\leq s\leq \tau $ into 3 parts, namely $A, B$ and $C$ as follows:
		\begin{itemize}
			\item $ q^s\leq Q$;
			\item $s\geq 2, Q<q^s$;
			\item $s=1, Q<q\leq P^{+}(n-1)$.
		\end{itemize}
		
		\noindent Upper bound of $A$ can be easily obtained from the latter inequality. We have
		$$A=\sum_{q^s\leq Q}(\log{q})\sum_{\substack{p_2\equiv \overline{p_1} \pmod{q^s}\\p_1p_2\leq x+1\\p_1\leq p_2}}1=\sum_{q^s\leq Q}(\log{q})\sum_{p_1\leq \sqrt{x}}\frac{Li(x/p_1)}{\varphi(q^s)}+O\left ( \frac{x\log{\log{x}}}{(\log{x})^2}\right )\ $$
		$$=\sum_{q^s\leq Q}\frac{\log{q}}{\varphi{(q^s)}}\left( \frac{x\log{\log{x}}}{\log{x}}+O\left(\frac{x\log{\log{x}}}{(\log{x})^2}\right)\right)+O\left ( \frac{x\log{\log{x}}}{(\log{x})^2}\right ).$$
		Consider the following sum
		$$\sum_{q^s\leq Q}\frac{\log{q}}{\varphi{(q^s)}}=\sum_{q\leq Q}\sum_{s\leq \log{Q}/\log{q}}\frac{\log{q}}{q^{s-1}(q-1)}\leq \sum_{q\leq Q} \frac{\log{q}}{q-1}\sum_{s=1}^{+\infty}\frac{1}{q^{s-1}}=\log{Q}+O(1).$$
		Since $Q=\sqrt[4]{x}(\log{x})^{-B}$, we have
		\begin{equation}
			\label{sum A}
			A=\frac{1}{4}x\log{\log{x}}+O\left(\frac{x(\log{\log{x}})^2}{\log{x}}\right).
		\end{equation}
		In sum $B$ semi-primes $p_1p_2$ lie in arithmetic progression with large difference $q^s$ and $s\geq 2$. Therefore the contribution to $\log{(R)}$ of this term is small. We have
		\begin{equation}
			\label{sum B}
			B \ll x^{11/12}(\log{x})^{1+B/3}.
		\end{equation}
		
		We now turn our attention to the sum $C$. For $0.6<\theta \leq 0.625$ and $0<\varepsilon<0.01 $ arbitrarily small number, we write $$C=\left(\sum_{p_1\leq \sqrt{x}}\sum_{Q<q\leq \sqrt[4]{x}}+\sum_{p_1\leq \sqrt{x}}\sum_{\sqrt[4]{x}<q\leq \sqrt{x/p_1}}+\sum_{p_1\leq x^{1-4\theta/(3-\varepsilon)}}\sum_{\sqrt{x/p_1}<q\leq x^{\theta}}\right.$$
		$$ \left.+\sum_{x^{1-4\theta/(3-\varepsilon)<p_1\leq \sqrt{x}}}\sum_{(x/p1)^{3/4-\varepsilon}<q\leq x^{\theta} }\right)(\log{q})\pi'\left(\frac{x}{p_1};q,\overline{p_1}\right)+\sum_{x^{\theta}<q\leq P^{+}(n-1)}m_1(q)$$
		$=C_1+C_2+C_3+C_4+C_5,$ say. Here symbol $\pi'$ means we count number of primes with condition $p_1\leq p_2\leq x/p_1.$ From \ref{logR}, \ref{sum A}, \ref{sum B} and the latter equation we deduce that if $C_1+C_2+C_3+C_4 \leq (0.75-\beta)$, where $0<\beta$ an arbitrarily small number, then $C_5>0$ i.e., $P^+{(n-1)}$ exceeds $x^{\theta}$. In the summation $C_4$ the threshold $x^{1-4\theta/(3-\varepsilon)}<p_1$ is deduced from condition $x^{\theta}\geq (x/p_1)^{3/4-\varepsilon}$.
		
		The term $C_1$, by Brun-Titchmarsh theorem, does not exceed
		$$
		\sum_{Q<q\leq \sqrt[4]{x}}\log{q}\sum_{p_1\leq \sqrt[4]{x}}\pi(x/p_1;q,\overline{p_1})
		$$
		\begin{equation}
			\label{C_1}
			\ll \sum_{Q<q\leq \sqrt{x+1}}\log{q}\sum_{p_1\leq \sqrt{x}}\frac{x}{p_1\varphi(q)\log{(2x/p_1q)}}\ll \frac{x(\log{\log{x}})^2}{\log{x}}.
		\end{equation}
		
		Let us now consider the summation $C_2$. Put $Q_1=\sqrt{x/p_1}(\log{x/p_1})^{-B},$ where $B=B(p_1)$ is defined from Bombieri-Vinogradov theorem such that
		$$
		\sum_{q\leq Q_1}\max_{(b,q)=1}\left| \pi\left(\frac{x}{p_1};q,b\right)-\frac{Li(x/p_1)}{\varphi(q)} \right| \ll \frac{x}{p_1(\log{x/p_1})^2}.
		$$
		We write
		$$C_2\leq \left(\sum_{p_1\leq \sqrt{x}}\sum_{\sqrt[4]{x}<q\leq Q_1}+\sum_{p_1\leq \sqrt{x}}\sum_{Q<q\leq \sqrt{x/p_1}}\right)(\log{q})\pi\left(\frac{x}{p_1};q,b\right).$$
		It follows from Bombieri-Vinogradov theorem that the contribution of the first term to $C_2$ does not exceed
		$$\sum_{p_1\leq \sqrt{x}}\sum_{\sqrt[4]{x}<q\leq Q_1}\frac{(\log{q})Li(\frac{x}{p_1})}{\varphi(q)}+ O\left(\frac{x}{p_1(\log{\frac{x}{p_1}})^2}\right)\leq 
		\frac{1}{4}x\log{\log{x}}+O(x).$$
		The second term of $C_2$ can be evaluated by Brun-Titchmarsh theorem. Its contribution to $C_2$ is $O(x\log{\log{x}}/\log{x})$. Hence, we have
		\begin{equation}
			\label{C2}
			C_2\leq \frac{1}{4}x\log{\log{x}}+O(x).
		\end{equation}
		
		Next, we study contribution of $C_3$ to $C$. An upper bound for $C_3$ is obtained from assessment of $\pi(x/p_1;q,\overline{p_1})$ with large values of $q$ by lemma 2.1. Remark that lemma 2.1 states for the case when $a$ is a fixed integer, but in our case the role of $a$ is taken by $\overline{p_1}$, which can increase as exponent of $x$. By the similar argument of C. Hooley \cite{Hooley73}, one should obtain the same result for the case large $a$. The reason is that the constants, which appeared  in Hooley's proof, implied by the O-notation depend on the given non-zero a integer $a$ like divisor function $\tau(a)\ll x^{\gamma}$ and $0<\gamma-$ arbitrarily small number. The condition $p_1 \leq x^{1-4\theta/(3-\varepsilon)} $ implies that $q\leq x^{\theta}<\left(\frac{x}{p_1}\right)^{3/4-\varepsilon}$. We divide interval $\sqrt{x/p_1}<q\leq x^{\theta}  $ into sub-intervals of form $X<q\leq rX$ and $\sqrt{x/p_1}<X\leq x^{\theta}$. Here $r\in (1,2)$ is an arbitrarily number. Let $G$ be defined from conditions $$
		r^G\sqrt{\frac{x}{p_1}}>x^{\theta}\geq r^{G-1}\sqrt{\frac{x}{p_1}}\Leftrightarrow  G>\left(\theta\log{x}-0.5\log{(x/p_1)}\right)(\log{r})^{-1}\geq G-1.$$
		Then, we write
		$$C_3\leq \sum_{p_1\leq p_1 \leq x^{1-4\theta/(3-\varepsilon)}}\sum_{g=1}^G\sum_{r^{g-1}\sqrt{\frac{x}{p_1}} <q\leq r^{g}\sqrt{\frac{x}{p_1}}}(\log{q})\pi\left(\frac{x}{p_1};q,\overline{p_1}\right)$$
		$$\leq \sum_{p_1\leq p_1 \leq x^{1-4\theta/(3-\varepsilon)}}\sum_{g=1}^G \frac{(4+\eta)(r-1)x}{p_1\log{x/p_1}} \leq x(4+\eta_1)\frac{(r-1)}{\log{r}}\sum_{p_1 \leq x^{1-4\theta/(3-\varepsilon)}}\frac{\theta\log{x}}{p_1\log{(x/p_1)}}-\frac{1}{2p_1}$$
		$$\leq (4+\eta_2)(\theta-0.5)\frac{(r-1)}{\log{r}}x\log{\log{x}}.$$ Let $r$ tend to $1$ we infer that
		\begin{equation}
			\label{C3}
			C_3\leq (4+\eta_2)(\theta-0.5)x\log{\log{x}},
		\end{equation}
		where $\eta_2>0$ satisfies condition such that limit $\eta_2$ as $x$ approaches infinity equal to zero.
		
		The sum $C_4$ must be evaluated more carefully because of large values, which can be taken by $q$ and $p_1.$ More specifically, $x/(qp_1)\leq 1$ when $q\asymp x^{\theta}$ and $p_1\asymp \sqrt{x}$, therefore, assessment of Selberg's type and Brun-Titchmarsh's type can not be used. To evaluate $C_4$ we are going to use lemma 2.2. Let us write
		$$C_4\leq \sum_{x^{3/8-0.5\varepsilon}<q\leq x^{\theta}}\log{q}\sum_{\substack{x^{2-8\theta/(3-\varepsilon)}<n\leq x\\n-\text{semi-prime}}}1.$$ Put $N=x^{2-8\theta/(3-\varepsilon)}$ and define $G$ from inequalities $$ 2^GN>x\geq 2^{G-1}N\Leftrightarrow G-1\leq \left(\frac{8\theta}{3-\varepsilon}-1\right)\frac{\log{x}}{\log{2}}<G.$$ Then, we have $$ C_4\leq \sum_{x^{3/8-0.5\varepsilon}<q\leq x^{\theta}}\log{q}\sum_{g=1}^G\sum_{\substack{2^{g-1}N<n\leq 2^gN\\n-\text{semi-prime}}}1$$
		$$\ll \sum_{x^{3/8-0.5\varepsilon}<q\leq x^{\theta}}\log{q}\sum_{g=1}^G \frac{N\log{\log{N}}}{\log{N}} \ll \sum_{x^{3/8-0.5\varepsilon}<q\leq x^{\theta}}(\log{q})x^{2-8\theta/(3-\varepsilon)}\log{\log{x}}$$
		$$\ll x^{2+\theta-8\theta/(3-\varepsilon)}\log{\log{x}}.$$
		From condition $0.6<\theta$ we have
		\begin{equation}
			\label{C4}
			C_4\ll x.
		\end{equation}
		
		From \ref{C2}, \ref{C3} and \ref{C4} we have 
		$$C_5\geq (0.5-(4+\eta_2)(\theta-0.5))x\log{\log{x}}+O(x)\geq (2.5-4\theta-\eta_3)x\log{\log{x}}>0$$ for all $\theta<0.625$. Our theorem is deduced from definition of $C_5$.
	\end{proof} 
	\section{Conclusion}
	It worth to pointing out that the limit $\theta<0.625$ comes from the C. Hooley's assessment (lemma 2.1), which connected to distribution of primes in arithmetic progressions $a, \pmod q $ for large values of $q$. Since semi-primes appear more often in set of natural numbers, an analogue of Hooley's lemma for semi-primes in arithmetic progressions for large module $q$ should be able to allow one obtain more exactly limit for $\theta.$ We remark also  that our result can be compared with earlier Hooley's bound $P^+(p+a)>x^{0.625}$.
	
\end{document}